\documentclass[12pt]{article}

\usepackage{mathrsfs}
\usepackage{color}
\usepackage{amssymb, amsthm, amsfonts, amsxtra, amsmath}

\usepackage{latexsym}

\topmargin -15mm \textwidth 19truecm \textheight 24truecm
\oddsidemargin -12mm \evensidemargin 4mm

\newtheorem{Theorem}{Theorem}[section]
\newtheorem{Def}[Theorem]{Definition}
\newtheorem{Lem}[Theorem]{Lemma}

\newtheorem{Def-Prop}[Theorem]{Definition-Proposition}
\newtheorem{Not}[Theorem]{Notation}
\newtheorem*{Lemm}{Lemma}

\date{}

\begin{document}

\title{A note on the von Neumann algebra underlying some universal compact quantum groups}
\author{Kenny De Commer\footnote{Research Assistant of the Research Foundation - Flanders (FWO -
Vlaanderen).}\\ \small Department of Mathematics,  K.U. Leuven\\
\small Celestijnenlaan 200B, 3001 Leuven, Belgium\\ \\ \small e-mail: kenny.decommer@wis.kuleuven.be}
\maketitle

\newcommand{\acnabla}{\nabla\!\!\!{^\shortmid}}
\newcommand{\undersetmin}[2]{{#1}\underset{\textrm{min}}{\otimes}{#2}}
\newcommand{\otimesud}[2]{\overset{#2}{\underset{#1}{\otimes}}}

\newcommand{\otimesmin}{\underset{\textrm{min}}{\otimes}}
\newcommand{\bigback}{\!\!\!\!\!\!\!\!\!\!\!\!\!\!\!\!\!\!\!\!\!\!\!\!}

\abstract{\noindent We show that for $F\in GL(2,\mathbb{C})$, the von Neumann algebra associated to the universal quantum group $A_u(F)$ is a free Araki-Woods factor.}\vspace{0.7cm}



\section*{Introduction}

\noindent It is a classical theorem that any compact Lie group is a closed subgroup of some $U(n)$. In \cite{VDae1}, a class of quantum groups was introduced which plays the same r\^{o}le with respect to the compact matrix quantum groups (introduced in \cite{Wor1}, but there called compact quantum \emph{pseudo}groups). These universal quantum groups were denoted $A_u(F)$, where the parameter $F$ takes values in invertible matrices over $\mathbb{C}$. In \cite{Ban1}, the representation theory of the $A_u(F)$ was investigated, and it was shown that the irreducible representations are naturally labeled by the free monoid with two generators. Also on the level of the `function algebra' of $A_u(F)$, freeness manifests itself: it was shown in \cite{Ban1} that the (normalized) trace of the fundamental representation is a circular element w.r.t.~the Haar state (in the sense of Voiculescu, see \cite{Voi1}). Furthermore, the von Neumann algebra associated to $A_u(I_2)$, where $I_2$ is the unit matrix in $GL(2,\mathbb{C})$, is actually isomorphic to the free group factor $\mathscr{L}(\mathbb{F}_2)$.\\


\noindent In this note, we generalize this last result by showing that for $0<q\leq 1$, the von Neumann algebra underlying the universal quantum group $A_u(F)$ with $F=\left(\begin{array}{ll} 1 &0 \\ 0& q\end{array}\right)$ is a free Araki-Woods factor (\cite{Shl1}), namely the one associated to the orthogonal representation \[t\rightarrow \left(\begin{array}{ll} \cos(t\ln q^2) & -\sin(t\ln q^2) \\ \sin(t\ln q^2) & \cos(t\ln q^2)\end{array}\right)\] of $\mathbb{R}$ on $\mathbb{R}^2$. The proof of this fact uses a technique similar to the one of Banica for the case $F=I_2$, combined with results from \cite{Hou1} (which are based on the matrix model techniques from \cite{Shl1}). Since \[A_u(F) = A_u(\lambda U|F|U^*)\] for any $\lambda \in \mathbb{R}^+_0$ and any unitary $U$ (see \cite{Ban1}), we obtain that all $A_u(F)$ with $F\in GL(2,\mathbb{C})$ have free Araki-Woods factors as their associated von Neumann algebras.  \\

\noindent \textit{Remarks on notation:} We denote by $\odot$ the algebraic tensor product of vector spaces over $\mathbb{C}$, and by $\otimes$ the spatial tensor product between von Neumann algebras or Hilbert spaces. If $M$ is a von Neumann algebra and $x_1,x_2,\ldots$ are elements in $M$, we denote by $W^*(x_1,x_2,\ldots)$ the von Neumann subalgebra of $M$ which is the $\sigma$-weak closure of the unital $^*$-algebra generated by the $x_i$.\\



\section{Preliminaries}

\noindent In this preliminary section, we will give, for the sake of economy, ad hoc definitions of the von Neumann algebras associated to the $A_u(F)$ and $A_o(F)$ quantum groups (\cite{VDae1}), and of the free Araki-Woods factors (\cite{Shl1}), for special values of their parameters. \\

\noindent \emph{Throughout this section, we fix a number $0<q<1$.}\\

\begin{Def} We define the C$^*$-algebra $C_u(H)$ as the universal enveloping C$^*$-algebra of the unital $^*$-algebra generated by elements $a$ and $b$, with defining relations \[\left\{\begin{array}{lllclll} a^*a +b^*b = 1 && \!\!\!\!\!\!\!\!\!\!\!\!ab = qba \\aa^* + q^2 bb^* = 1 &&\!\!\!\!\!\!\!\!\!\!\!\! a^*b = q^{-1}ba^* \\ &\!\!\!\!\!\!\!\!bb^* = b^*b.\end{array}\right.\]\end{Def}

\noindent \emph{Remark:} $C_u(H)$ is the (universal) C$^*$-algebra associated with the quantum group $H=SU_q(2)$. In \cite{Ban1}, Proposition 5, it is shown that this equals the quantum group $A_o(\left(\begin{array}{ll}0 & 1 \\ -q^{-1} & 0\end{array}\right))$.\\

\noindent The following fact is found in \cite{Wor2}.

\begin{Lem} Let $\mathscr{H}$ be the Hilbert space $l^2(\mathbb{N}) \otimes l^2(\mathbb{Z})$, whose canonical basis elements we denote as $\xi_{n,k}$ (and with the convention $\xi_{n,k} =0$ when $n<0$). Then there exists a faithful unital $^*$-representation of $C_u(H)$ on $\mathscr{H}$, determined by \[\left\{\begin{array}{l} \pi(a)\, \xi_{n,k} = \sqrt{1-q^{2n}} \xi_{n-1,k},\\ \pi(b)\xi_{n,k} = q^{n} \,\xi_{n,k+1}.\end{array}\right.\]  \end{Lem}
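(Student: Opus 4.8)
The plan is to build $\pi$ by hand on the generating $^*$-algebra, invoke universality, and then establish faithfulness through a direct-integral decomposition over the circle. First I would \emph{define} $\pi(a)$ and $\pi(b)$ by the two displayed formulas and note they extend to bounded operators: since $0\le\sqrt{1-q^{2n}}\le 1$ and $0<q^n\le 1$, both have norm at most $1$ on the orthonormal basis $(\xi_{n,k})$. A short computation gives the adjoints $\pi(a)^*\xi_{n,k}=\sqrt{1-q^{2n+2}}\,\xi_{n+1,k}$ and $\pi(b)^*\xi_{n,k}=q^{n}\xi_{n,k-1}$, after which I would verify the five defining relations by evaluating both sides on each $\xi_{n,k}$. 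For example $\pi(a^*a)\xi_{n,k}=(1-q^{2n})\xi_{n,k}$ and $\pi(b^*b)\xi_{n,k}=q^{2n}\xi_{n,k}$ sum to $\xi_{n,k}$, while $\pi(a)\pi(b)\xi_{n,k}=q^{n}\sqrt{1-q^{2n}}\,\xi_{n-1,k+1}=q\,\pi(b)\pi(a)\xi_{n,k}$; the remaining three are identical in spirit. Because $C_u(H)$ is the universal enveloping C$^*$-algebra of these generators and relations, $\pi$ extends uniquely to a unital $^*$-homomorphism on all of $C_u(H)$.

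The substance of the lemma is \emph{faithfulness}, and this is where the real work lies. I would diagonalize the $l^2(\mathbb{Z})$-variable by the Fourier transform $l^2(\mathbb{Z})\cong L^2(\mathbb{T})$, under which the shift $\xi_{n,k}\mapsto\xi_{n,k+1}$ becomes multiplication by the coordinate $z\in\mathbb{T}$. This exhibits $\pi$ as a direct integral $\pi\cong\int^{\oplus}_{\mathbb{T}}\pi_z\,dz$ of representations on $l^2(\mathbb{N})$, where $\pi_z(a)=A$ is the weighted shift $A\xi_n=\sqrt{1-q^{2n}}\,\xi_{n-1}$ (independent of $z$) and $\pi_z(b)=z\,q^{N}$, with $N$ the number operator. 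Since $z\mapsto\pi_z(x)$ is norm-continuous for $x$ in the dense $^*$-algebra, one has $\ker\pi=\bigcap_{z\in\mathbb{T}}\ker\pi_z$, so it suffices to understand the single fibre algebra $C^*(A)$ and how the fibres fit together.

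The key structural observation is that $1-A^*A=q^{2N}$ is a \emph{compact} diagonal operator whose top eigenvalue is simple, so its spectral projection onto $\xi_0$ lies in $C^*(A)$; multiplying by powers of $A$ and $A^*$ produces all matrix units, whence $\mathcal{K}(l^2(\mathbb{N}))\subseteq C^*(A)$. Moreover $A$ is Fredholm of index $1$, so $C^*(A)/\mathcal{K}\cong C(\mathbb{T})$ with $A$ mapping to a generating unitary, and $\pi_z(b)=z\,q^N$ is itself compact; hence the symbol map $C_u(H)\xrightarrow{\pi_z}C^*(A)\to C(\mathbb{T})$ sends $a\mapsto u$, $b\mapsto 0$ and so coincides with the canonical surjection $\epsilon\colon C_u(H)\to C(\mathbb{T})$, \emph{independently of $z$}. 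Writing $I=\ker\epsilon$, I would finish in two steps. First, if $x\in\ker\pi$ then $\epsilon(x)=0$, because a nonzero symbol would force $\pi_z(x)\notin\mathcal{K}$, in particular $\pi_z(x)\neq 0$; thus $\ker\pi\subseteq I$. Second, one identifies $I\cong\mathcal{K}(l^2(\mathbb{N}))\otimes C(\mathbb{T})$ and observes that, in the Fourier picture, $\pi|_I$ is exactly $\mathcal{K}$ acting standardly on $l^2(\mathbb{N})$ tensored with $C(\mathbb{T})$ acting by multiplication on $L^2(\mathbb{T})$, which is faithful; hence $\ker\pi\cap I=0$. Combining the two steps gives $\ker\pi=0$.

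The main obstacle is this last step: pinning down $I=\ker\epsilon$ as $\mathcal{K}\otimes C(\mathbb{T})$ and checking that $\pi$ restricts to the standard faithful representation there. I would make this precise using the gauge action $\gamma_\lambda\colon a\mapsto a,\ b\mapsto\lambda b$ (one checks it preserves all five relations and that $\pi_z\circ\gamma_\lambda=\pi_{\lambda z}$): the fibres $\pi_z$ are then permuted transitively by automorphisms, so no single $z$ is distinguished and the evaluation representations of $I$ indexed by $z\in\mathbb{T}$ are all realized and assembled faithfully by the direct integral. As an alternative route to faithfulness, one could instead produce the Haar state of $SU_q(2)$ as a weak-$^*$ limit of vector states $\langle\pi(\cdot)\zeta_m,\zeta_m\rangle$ and invoke its faithfulness, but the ideal-theoretic argument above seems cleaner and more self-contained.
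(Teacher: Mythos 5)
The paper offers no proof of this lemma at all---it is quoted as a known fact from \cite{Wor2}---so the only question is whether your argument stands on its own. The first half does: defining $\pi(a)$, $\pi(b)$ by the displayed formulas, computing the adjoints, checking the five relations on basis vectors, and invoking universality is correct and routine, as is the Fourier-transform picture $\pi\cong\int_{\mathbb{T}}^{\oplus}\pi_z\,dz$ with $\pi_z(a)=A$, $\pi_z(b)=z\,q^N$, the inclusion $\mathcal{K}\subseteq C^*(A)$, the identification of $C^*(A)$ with the Toeplitz algebra, and the conclusion $\ker\pi\subseteq I:=\ker\epsilon$.

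The gap is in your second step. The statement ``one identifies $I\cong\mathcal{K}(l^2(\mathbb{N}))\otimes C(\mathbb{T})$ and observes that $\pi|_I$ is exactly the standard representation'' is not an observation; it is essentially the whole content of the lemma, and the only natural way to prove it is via the representation $\pi$ itself---so as written the argument is circular. Concretely: you know $\pi(I)=\mathcal{K}\otimes C(\mathbb{T})$, but nothing you have established rules out $\pi|_I$ having a kernel (i.e.\ $I$ being a nontrivial extension of $\mathcal{K}\otimes C(\mathbb{T})$ by some unseen ideal). The gauge-action remark does not repair this: since $\pi_z=\pi_1\circ\gamma_z$, it shows only that $\ker\pi$ is the largest $\gamma$-invariant ideal contained in $\ker\pi_1$ (and $\pi_1$ is genuinely non-injective, e.g.\ $\pi_1(b-b^*)=0$), not that this ideal vanishes. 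What is missing is the spectral analysis that Woronowicz actually carries out: in any representation $\rho$ with $\rho(b)\neq 0$, the relations $a(b^*b)=q^2(b^*b)a$, $a^*a=1-b^*b$, $aa^*=1-q^2b^*b$ force $\mathrm{sp}(\rho(b^*b))\setminus\{0\}=\{q^{2n}:n\geq 0\}$ and every irreducible such $\rho$ to be unitarily equivalent to some $\pi_z$; faithfulness of $\pi$ then follows because $\pi\oplus\epsilon$ weakly contains all irreducibles and $\ker\pi\subseteq\ker\epsilon$. (Your alternative via faithfulness of the Haar state on the \emph{universal} C$^*$-algebra is also a genuine theorem---coamenability of $SU_q(2)$---not a shortcut.) Until one of these is supplied, faithfulness has not been proved.
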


\begin{Def} In the notation of the previous lemma, denote by $\psi$ the state \[\psi(x) = (1-q^2) \sum_{n\in \mathbb{N}} q^{2n} \langle \pi(x) \xi_{n,0},\xi_{n,0}\rangle\] on $C_u(H)$. Then $\psi$ is called the \emph{Haar state} on $C_u(H)$.\end{Def}

\noindent Of course, this name is motivated by the further compact quantum group structure on $C_u(H)$, which we will however not need in the following.\\

\begin{Def} The von Neumann algebra $\mathscr{L}^{\infty}(H)$ is defined to be the $\sigma$-weak closure of $C_u(H)$ in its GNS-representation with respect to the Haar state $\psi$.\end{Def}

\noindent We then continue to write $\psi$ for the extension of $\psi$ to a normal state on $\mathscr{L}^{\infty}(H)$.\\

\begin{Not}\label{Not1}
\noindent We will further use the following notations:
\begin{itemize} \item The matrix units of $B(l^2(\mathbb{N}))$ w.r.t.~the canonical basis of $l^2(\mathbb{N})$ are written $e_{ij}$. \item We denote $\omega$ for the normal state $\omega(e_{ij}) = \delta_{i,j} (1-q^2)q^{2i}$ on $B(l^2(\mathbb{N}))$.
\item We denote by $S \subseteq \mathscr{L}(\mathbb{Z})$ the shift operator $\xi_k\rightarrow \xi_{k+1}$ on $l^2(\mathbb{Z})$.
 \item We denote by $\tau$ the state on $\mathscr{L}(\mathbb{Z})$ which makes $S$ into a Haar unitary with respect to $\tau$.\end{itemize}\end{Not}

\noindent This last fact simply means that $\tau(S^n)=0$ for $n\in \mathbb{Z}_0$.\\

\noindent We will use the terminology `W$^*$-probability space' when talking about a von Neumann algebra with some fixed normal state on it. An isomorphism between two W$^*$-probability spaces is then a $^*$-isomorphism between the underlying von Neumann algebras, preserving the associated fixed states.\\

\begin{Lem} There is a natural isomorphism \[(\mathscr{L}^{\infty}(H),\psi)\rightarrow (B(l^{2}(\mathbb{N}))\otimes \mathscr{L}(\mathbb{Z}),\omega\otimes \tau)\] of W$^*$-probability spaces.\end{Lem}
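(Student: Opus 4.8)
The plan is to construct the isomorphism explicitly from the concrete representation $\pi$ on $\mathscr{H} = l^2(\mathbb{N})\otimes l^2(\mathbb{Z})$ given in the previous lemma, and to identify the resulting von Neumann algebra and state. First I would observe that the operator $\pi(b)$ acts as $\pi(b)\xi_{n,k} = q^n\xi_{n,k+1}$, which factors as a diagonal operator in the $l^2(\mathbb{N})$-variable tensored with the shift in the $l^2(\mathbb{Z})$-variable; explicitly $\pi(b) = D\otimes S$ where $D\xi_n = q^n\xi_n$. Similarly $\pi(a)\xi_{n,k} = \sqrt{1-q^{2n}}\,\xi_{n-1,k}$ is a weighted one-sided shift in the $l^2(\mathbb{N})$-variable tensored with the identity on $l^2(\mathbb{Z})$, so $\pi(a) = T\otimes 1$ where $T\xi_n = \sqrt{1-q^{2n}}\,\xi_{n-1}$. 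Thus both generators, hence the whole $^*$-algebra they generate, live inside $B(l^2(\mathbb{N}))\otimes \mathscr{L}(\mathbb{Z})$, viewing $S\in\mathscr{L}(\mathbb{Z})\subseteq B(l^2(\mathbb{Z}))$.

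Next I would argue that the $\sigma$-weak closure is exactly all of $B(l^2(\mathbb{N}))\otimes \mathscr{L}(\mathbb{Z})$. For the $B(l^2(\mathbb{N}))$-factor, the point is that $\pi(a)$ is a weighted unilateral shift with strictly positive, injective weights, so the polar decomposition and spectral calculus recover the partial isometry and the diagonal operator $D$, and standard arguments show that the von Neumann algebra generated by an injective weighted unilateral shift is all of $B(l^2(\mathbb{N}))$; this produces all the matrix units $e_{ij}$. Tensoring these matrix units against $\pi(b) = D\otimes S$ and using that $D$ is now available in the first leg, I recover $1\otimes S$, and since $S$ generates $\mathscr{L}(\mathbb{Z})$ the two legs together generate $B(l^2(\mathbb{N}))\otimes\mathscr{L}(\mathbb{Z})$. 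This shows the underlying von Neumann algebra of the image is the asserted tensor product.

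It then remains to match the states. The Haar state is $\psi(x) = (1-q^2)\sum_n q^{2n}\langle\pi(x)\xi_{n,0},\xi_{n,0}\rangle$, and I would compute the right-hand side against the decomposition above: on a product $e_{ij}\otimes S^m$ the vector state picks out $\langle (e_{ij}\otimes S^m)\xi_{n,0},\xi_{n,0}\rangle = \delta_{ij}\delta_{in}\delta_{m,0}$, so $\psi(e_{ij}\otimes S^m) = \delta_{ij}\delta_{m,0}(1-q^2)q^{2i}$. Comparing with Notation~\ref{Not1}, this is precisely $\omega(e_{ij})\tau(S^m)$, since $\omega(e_{ij}) = \delta_{ij}(1-q^2)q^{2i}$ and $\tau(S^m) = \delta_{m,0}$. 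By linearity and normality $\psi$ agrees with $\omega\otimes\tau$ on the $\sigma$-weakly dense $^*$-algebra generated by the matrix units and powers of $S$, hence everywhere, giving an isomorphism of W$^*$-probability spaces.

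The main obstacle is the density step in the second paragraph: one must rule out that the generated von Neumann algebra is a proper subalgebra of $B(l^2(\mathbb{N}))\otimes\mathscr{L}(\mathbb{Z})$. The cleanest route is to show that the injective weighted shift $T$ generates $B(l^2(\mathbb{N}))$ in the first leg — for instance by noting $T^*T$ is diagonal with distinct eigenvalues $1-q^{2n}$, so all spectral projections $e_{nn}$ lie in the generated algebra, after which $T$ shifts between them to produce every $e_{ij}$ — and then to handle the entanglement between the two legs in $\pi(b)=D\otimes S$ carefully, using that $D$ (being a function of $T^*T$) is already recovered in the first leg so that $1\otimes S$ can be isolated. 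One subtlety worth checking is that $\mathscr{L}(\mathbb{Z})$, rather than all of $B(l^2(\mathbb{Z}))$, is the correct target for the second leg, which is guaranteed because only powers of the single unitary $S$ appear.
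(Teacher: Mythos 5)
Your proposal is correct and follows essentially the same route as the paper's proof: identify $\mathscr{L}^{\infty}(H)$ with $\pi(C_u(H))''$, show via functional calculus on $\pi(a)$ and $\pi(b)$ that this double commutant contains all $e_{ij}\otimes S^n$ and hence equals $B(l^2(\mathbb{N}))\otimes\mathscr{L}(\mathbb{Z})$, and check that $\psi$ agrees with $\omega\otimes\tau$ on these elements. You supply more detail than the paper does (e.g.\ the explicit factorizations $\pi(a)=T\otimes 1$, $\pi(b)=D\otimes S$ and the spectral argument recovering the $e_{nn}$ from $T^*T$), and your isolation of $1\otimes S$ is most cleanly done via $e_{00}\otimes S=(e_{00}\otimes 1)(D\otimes S)$ since $D^{-1}$ is unbounded, but the argument is sound.
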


\begin{proof} By the construction of $\psi$, we may identify $\mathscr{L}^{\infty}(H)$ with $\pi(C_u(H))''$, and it is then sufficient to prove that this last von Neumann algebra equals $B(l^{2}(\mathbb{N}))\otimes \mathscr{L}(\mathbb{Z})$. Clearly, $\pi(C_u(H))''\subseteq B(l^{2}(\mathbb{N}))\otimes \mathscr{L}(\mathbb{Z})$. By functional calculus on $a$ and $b$, we have $e_{ij}\otimes S^n \in \pi(C_u(H))''$ for all $i,j\in \mathbb{N}$ and $n\in \mathbb{Z}$, so in fact equality holds.
\end{proof}

\noindent We will always write $(1\otimes S)$ for the copy of $S \in \mathscr{L}(\mathbb{Z})$ inside $\mathscr{L}^{\infty}(H)$. Hence there should be no notational confusion in the following definition.

\begin{Def}\label{DefTeo} The W$^*$-probability space $(\mathscr{L}^{\infty}(G),\varphi)$ is defined as \[(W^*(Sa,Sb,Sa^*,Sb^*),(\tau*\psi)_{|\mathscr{L}^{\infty}(G)})\subseteq (\mathscr{L}(\mathbb{Z}),\tau)*(\mathscr{L}^{\infty}(H),\psi).\]
\end{Def}

\noindent \emph{Remark:} By \cite{Ban1}, Th\'{e}or\`{e}me 1.(iv), the von Neumann algebra $\mathscr{L}^{\infty}(G)$ will coincide with the von Neumann algebra associated with the universal quantum group $A_u(\left(\begin{array}{ll} 1 &0 \\ 0& q\end{array}\right))$, and $\varphi$ with its Haar state.\\


\noindent Recall that the state $\omega$ was introduced in Notation \ref{Not1}.

\begin{Def}\label{DefFreeAr}(\cite{Shl1}, Corollary 4.9) By a \emph{free Araki-Woods factor (at parameter $q^2$)}, we mean a W$^*$-probability space $(N,\phi)$ isomorphic to the free product $(\mathscr{L}(\mathbb{Z}),\tau)*(B(l^2(\mathbb{N})),\omega)$.
\end{Def}


\section{$\mathscr{L}^{\infty}(G)$ is free Araki-Woods}

\noindent \emph{Throughout this section, we again fix a number $0<q<1$. We also continue to use the notations introduced in the previous section.}\\

\noindent We proceed to prove the following theorem. \begin{Theorem}\label{TheoIso} The W$^*$-probability space $(\mathscr{L}^{\infty}(G),\varphi)$ is a free Araki-Woods factor at parameter $q^2$.\end{Theorem}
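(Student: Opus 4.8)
The plan is to reduce everything to the concrete matrix model of the isomorphism $(\mathscr{L}^{\infty}(H),\psi)\cong(B(l^2(\mathbb{N}))\otimes\mathscr{L}(\mathbb{Z}),\omega\otimes\tau)$, realise the two generators $Sa,Sb$ explicitly there, extract a Haar unitary and a copy of $(B(l^2(\mathbb{N})),\omega)$ from them by taking polar decompositions inside the free product, and finally recognise the resulting configuration as the free product of Definition \ref{DefFreeAr}, the freeness being supplied by the matrix-model results of \cite{Shl1} and \cite{Hou1}.

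First I would transport $a$ and $b$ through that isomorphism. Writing $D=\sum_n q^n e_{nn}$ and $V=\sum_{n\geq1}e_{n-1,n}$ for the backward shift on $l^2(\mathbb{N})$, the representation $\pi$ gives $a=\bigl(V(1-D^2)^{1/2}\bigr)\otimes 1$ and $b=(D\otimes1)(1\otimes S)$; in particular $b^*b=bb^*=D^2\otimes1$ and $a^*a=(1-D^2)\otimes1$, recovering the defining relations. Working inside $(\mathscr{L}(\mathbb{Z}),\tau)*(\mathscr{L}^{\infty}(H),\psi)$, I would then analyse the generators by polar decomposition, using that the free copy of $S$ is a Haar unitary free from $\mathscr{L}^{\infty}(H)$. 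From $Sb=\bigl(S(1\otimes S)\bigr)(D\otimes1)$ one reads off that $u:=S(1\otimes S)$ is a Haar unitary (a product of two free Haar unitaries) and that $|Sb|=D\otimes1$; since $D\otimes1$ is injective, $u$ is the strong limit of $Sb\,(D\otimes1+\varepsilon)^{-1}$ and hence lies in $\mathscr{L}^{\infty}(G)$, as does $D\otimes1$ and therefore every spectral projection $e_{nn}\otimes1$ (the eigenvalues $q^n$ of $D$ being distinct). The same treatment of $Sa=\bigl(S(V\otimes1)\bigr)\bigl((1-D^2)^{1/2}\otimes1\bigr)$ places the co-isometry $w:=S(V\otimes1)$ into $\mathscr{L}^{\infty}(G)$, whence $u^*w=V\otimes S^*\in\mathscr{L}^{\infty}(G)$ and, collecting these, $\mathscr{L}^{\infty}(G)=W^*\!\bigl(u,\,V\otimes S^*\bigr)$.

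I would then organise these ingredients into the free-product form. The diagonal projections carry the correct weights $\varphi(e_{nn}\otimes1)=\omega(e_{nn})=(1-q^2)q^{2n}$, so they form the diagonal of a candidate copy of $(B(l^2(\mathbb{N})),\omega)$, with off-diagonal partial isometries obtained by cutting $V\otimes S^*$ with the $e_{nn}\otimes1$ (producing $e_{n-1,n}\otimes S^*$); the unitary $u$ is the candidate free Haar unitary. What remains is to show that this copy of $B(l^2(\mathbb{N}))$ and the unitary $u$ are free with respect to $\varphi$ and together generate $\mathscr{L}^{\infty}(G)$, after which Corollary 4.9 of \cite{Shl1} (our definition of a free Araki--Woods factor at parameter $q^2$) concludes the proof.

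The freeness is where I expect the main obstacle to lie. The honest matrix units $e_{ij}\otimes1$ do \emph{not} belong to $\mathscr{L}^{\infty}(G)$: every off-diagonal element it contains is unavoidably twisted by the internal shift $1\otimes S$, which is itself entangled with the free generator $S$ inside $u$, so the required freeness cannot be checked by naive inspection of subalgebras. I would therefore handle it exactly as announced in the introduction: following Banica's computation in the tracial case $q=1$ (where the relevant element is circular, \cite{Ban1,Voi1}), I would recast $Sb$ as a generalised circular element in the sense of \cite{Shl1}, namely an $R$-diagonal-type element $u(D\otimes1)$ whose radial part has the distribution of $D$ under $\omega$, and then invoke the matrix-model results of \cite{Hou1} (built on \cite{Shl1}), which identify the von Neumann algebra generated by such a configuration with $(\mathscr{L}(\mathbb{Z}),\tau)*(B(l^2(\mathbb{N})),\omega)$, that is, with the free Araki--Woods factor at parameter $q^2$.
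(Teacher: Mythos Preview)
Your polar-decomposition analysis is correct up to and including $u=S(1\otimes S)\in\mathscr{L}^{\infty}(G)$, $w=S(V\otimes 1)\in\mathscr{L}^{\infty}(G)$ and $u^{*}w=V\otimes S^{*}\in\mathscr{L}^{\infty}(G)$, but the conclusion $\mathscr{L}^{\infty}(G)=W^{*}(u,V\otimes S^{*})$ is false: you have only checked that $Sa$ and $Sb$ lie in $W^{*}(u,V\otimes S^{*})$, not the two \emph{other} generators $Sa^{*}$ and $Sb^{*}$ from Definition~\ref{DefTeo}. In fact $Sb^{*}=u\,(D\otimes S^{-2})$, and $D\otimes S^{-2}$ is an element of $\mathscr{L}^{\infty}(H)$ which is $L^{2}$-orthogonal to every $f_{ij}=e_{ij}\otimes S^{\,i-j}$; since $u$ is $*$-free from $\mathscr{L}^{\infty}(H)$ one checks that $D\otimes S^{-2}$ is orthogonal to all of $W^{*}(u,V\otimes S^{*})=W^{*}(u,\{f_{ij}\})$ in $L^{2}$, hence cannot belong to it. So your candidate copy of $B(l^{2}(\mathbb{N}))$ together with $u$ generates a \emph{proper} subalgebra of $\mathscr{L}^{\infty}(G)$, and the final appeal to \cite{Hou1} identifies the wrong object.

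The missing generator is exactly the central unitary $1\otimes S^{2}$, arising as (the adjoint of) the polar part of $u^{*}Sb^{*}=D\otimes S^{-2}$. Once you adjoin it, the second factor is no longer $B(l^{2}(\mathbb{N}))$ but $W^{*}(\{f_{ij}\},1\otimes S^{2})\cong (B(l^{2}(\mathbb{N}))\otimes\mathscr{L}(\mathbb{Z}),\omega\otimes\tau)$; this is precisely the paper's $M_{2}$ in Lemma~\ref{Lem2}. The freeness of $u$ from $M_{2}$ then follows from Banica's trick (Lemma~\ref{Lem1}), giving $(\mathscr{L}^{\infty}(G),\varphi)\cong(\mathscr{L}(\mathbb{Z}),\tau)*(B(l^{2}(\mathbb{N}))\otimes\mathscr{L}(\mathbb{Z}),\omega\otimes\tau)$, and an additional, genuinely nontrivial step (Lemma~\ref{Lem3}, relying on \cite{Hou1}) is needed to absorb the extra $\mathscr{L}(\mathbb{Z})$ and conclude that this free product is the free Araki--Woods factor at parameter $q^{2}$.
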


\noindent By the remark after Definition \ref{DefTeo} and the remarks in the introduction, this will imply that if $F\in GL(2,\mathbb{C})$, then the von Neumann algebra associated to $A_u(F)$ is the free Araki-Woods factor at parameter $\frac{\lambda_1}{\lambda_2}$, where $\lambda_1\leq \lambda_2$ are the eigenvalues of $F^*F$ (where we take $\mathscr{L}(\mathbb{F}_2)$ to be the free Araki-Woods factor at parameter $1$).\\

\noindent The proof of Theorem \ref{TheoIso} will be preceded by three lemmas. Consider the following von Neumann subalgebras of $(\mathscr{L}(\mathbb{Z}),\tau)*(\mathscr{L}^{\infty}(H),\psi)$: \[(M_1,\varphi_1) = (W^*(S(1\otimes S)), (\tau*\psi)_{\mid M_1})\] and \[(M_2,\varphi_2) = (W^*((1\otimes S^*)a,(1\otimes S^*)b,(1\otimes S^*)a^*,(1\otimes S^*)b^*),(\tau*\psi)_{\mid M_2}).\]\\

\begin{Lem}\label{Lem1} The von Neumann algebras $M_1$ and $M_2$ are free with respect to each other, and $\mathscr{L}^{\infty}(G)$ is the smallest von Neumann subalgebra of $\mathscr{L}(\mathbb{Z})*\mathscr{L}^{\infty}(H)$ which contains them. \end{Lem}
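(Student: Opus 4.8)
The plan is to set $U := S(1\otimes S)$, so that $M_1 = W^*(U)$, and to work throughout in the concrete picture $\mathscr{L}^{\infty}(H)\cong B(l^2(\mathbb{N}))\otimes\mathscr{L}(\mathbb{Z})$ of the previous lemma, under which $a$ acts through the first leg alone (as the weighted shift determined by $\pi$) while $b = D\otimes S$, with $D$ a positive injective diagonal operator on $l^2(\mathbb{N})$ and $S$ the shift on the second leg. In particular $(1\otimes S^*)\,b = D$ is positive and injective, and $U = S(1\otimes S)$ is a product of the two free Haar unitaries $S\in\mathscr{L}(\mathbb{Z})$ and $(1\otimes S)\in\mathscr{L}^{\infty}(H)$ (the latter being Haar for $\psi$ since $\psi((1\otimes S)^n) = \tau(S^n)$). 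Hence $U$ is itself a Haar unitary for $\varphi$, and $(M_1,\varphi_1)\cong(\mathscr{L}(\mathbb{Z}),\tau)$.

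For the generation statement I would prove the two inclusions separately. For $\mathscr{L}^{\infty}(G)\subseteq W^*(M_1,M_2)$ it suffices to note that each generator factors as a product of a generator of $M_1$ and one of $M_2$: since $(1\otimes S)(1\otimes S^*)=1$, one has $Sa = U\cdot[(1\otimes S^*)a]$, and likewise $Sb$, $Sa^*$, $Sb^*$ equal $U$ times the corresponding generator of $M_2$. For the reverse inclusion I would first recover $M_2$ and $M_1$ inside $\mathscr{L}^{\infty}(G)$: from $(Sb)^*(Sb)=b^*b=D^2$ and functional calculus we get $D=(1\otimes S^*)b\in\mathscr{L}^{\infty}(G)$; since $D$ is injective and positive, the polar decomposition of $Sb$ reads $Sb = U\,D$ with $U$ unitary, so the partial isometry $U$ of this decomposition lies in $\mathscr{L}^{\infty}(G)$; and then $U^*(Sa)=(1\otimes S^*)a$, together with the analogous identities for $a^*$ and $b^*$, places all generators of $M_2$ in $\mathscr{L}^{\infty}(G)$. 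This yields $M_1,M_2\subseteq\mathscr{L}^{\infty}(G)$, and hence equality.

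For freeness I would prove the stronger statement that $M_1=W^*(U)$ is free from all of $\mathscr{L}^{\infty}(H)$ (which contains $M_2$), exploiting two facts about $v:=1\otimes S$: it is centered, $\psi(v)=0$, and $\psi$ is invariant under $\mathrm{Ad}(v)$, i.e.\ $\psi(vxv^*)=\psi(x)$ for all $x$, because $\tau$ is a trace on the second leg. The computation checks that every alternating product of $\varphi$-centered elements $U^{n_1}x_1U^{n_2}x_2\cdots$ (with $n_i\neq 0$ and $\psi(x_i)=0$) has vanishing $\varphi$-expectation. Substituting the reduced words $(Sv)^{n}$ turns such a product into a word in the free factors $\mathscr{L}(\mathbb{Z})$ and $\mathscr{L}^{\infty}(H)$; at a junction between two blocks of opposite sign the intervening $\mathscr{L}^{\infty}(H)$-letter is $v x_i v^*$, which is centered by the $\mathrm{Ad}(v)$-invariance, while at a same-sign junction the letter $v x_i$ (resp.\ $x_i v^*$) must be recentred, its scalar correction merging two $S$-powers of equal sign. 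The main obstacle, and the heart of the proof, is exactly this bookkeeping: one argues by induction on the number of $\mathscr{L}^{\infty}(H)$-letters, the fully recentred term vanishing by freeness of $\mathscr{L}(\mathbb{Z})$ and $\mathscr{L}^{\infty}(H)$, and each scalar-correction term being a strictly shorter word of the same type whose surviving $S$-powers, being of equal sign, never cancel to the identity. Restricting to $M_2\subseteq\mathscr{L}^{\infty}(H)$ then gives the freeness of $M_1$ and $M_2$.
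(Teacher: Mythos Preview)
Your proof is correct and follows the same route as the paper: both recover $U=S(1\otimes S)$ as the unitary in the polar decomposition of $Sb$ to handle generation, and both establish freeness by proving the stronger statement that $U$ is $*$-free from all of $\mathscr{L}^{\infty}(H)\supseteq M_2$. The only difference is packaging: the paper invokes a variant of Banica's Lemme~8 (with the trace hypothesis replaced by centrality of $d=1\otimes S$ in $\mathscr{L}^{\infty}(H)$), whereas you carry out the recentering induction directly using $\mathrm{Ad}(v)$-invariance of $\psi$; since $1\otimes S$ is in fact central in $B(l^2(\mathbb{N}))\otimes\mathscr{L}(\mathbb{Z})$, the two arguments coincide here.
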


\begin{proof} The proof is entirely similar to the one of Th\'{e}or\`{e}me 6 in \cite{Ban1}. First of all, remark that $S(1\otimes S)$ is the unitary part in the polar decomposition of $Sb$, so that $S(1\otimes S)$ is in $\mathscr{L}^{\infty}(G)$. Then of course \[(1\otimes S^*)a = (1\otimes S^*)S^*\cdot Sa\] is in $\mathscr{L}^{\infty}(G)$, and similarly for the other generators of $M_2$. Hence $M_1$ and $M_2$ indeed generate $\mathscr{L}^{\infty}(G)$.\\

\noindent The proof of the freeness of $M_1$ w.r.t.~$M_2$ is based on a small alteration of Lemme 8 of \cite{Ban1}.

\begin{Lemm}\label{LemBan1} Let $(A,\phi)$ be a unital $^*$-algebra together with a functional $\phi$ on it. Let $B\subseteq A$ be a unital sub-$^*$-algebra, and $d\in B$ a unitary in the center of $B$ such that $\phi(d)=\phi(d^*)=0$. Let $u\in A$ be a Haar unitary which is $^*$-free from $B$ w.r.t.~$\phi$. Then $ud$ is a Haar unitary which is $^*$-free from $B$ w.r.t.~$\phi$. \end{Lemm}

\begin{proof} This is precisely Lemme 8 of \cite{Ban1}, with the condition `$\phi$ is a trace' replaced by `$d$ is in the center of $B$'. However, the proof of that lemma still applies ad verbam.\end{proof}

\noindent We can then apply this lemma to get that $S(1\otimes S)$ is $^*$-free w.r.t.~$\mathscr{L}^{\infty}(H)$, by taking $(A,\phi)=(\mathscr{L}(\mathbb{Z}),\tau)*(\mathscr{L}^{\infty}(H),\psi)$, $B=\mathscr{L}^{\infty}(H)$, $d=1\otimes S$ and $u=S$. \emph{A fortiori}, we will then have $M_1$ free w.r.t.~$M_2$.\end{proof}

\begin{Lem}\label{Lem2} We have \[(M_1, \varphi_1) \cong (\mathscr{L}(\mathbb{Z}),\tau)\] and \[(M_2,\varphi_2)\cong (B(l^{2}(\mathbb{N}))\otimes \mathscr{L}(\mathbb{Z}),\omega\otimes \tau).\]
\end{Lem}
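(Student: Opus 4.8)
The plan is to prove the two isomorphisms in Lemma \ref{Lem2} separately, each by exhibiting a natural generator and computing the distribution of that generator with respect to the relevant state.

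For the first isomorphism, I would observe that $M_1 = W^*(S(1\otimes S))$ is generated by the single unitary $S(1\otimes S)$. By Lemma \ref{Lemm} (the altered version of Lemme 8 of \cite{Ban1}, applied with $u=S$, $B=\mathscr{L}^\infty(H)$, $d = 1\otimes S$), this element is a Haar unitary with respect to $\tau*\psi$. Hence $(M_1,\varphi_1)$ is the von Neumann algebra generated by a single Haar unitary equipped with the canonical trace, which is exactly $(\mathscr{L}(\mathbb{Z}),\tau)$ via the map sending the canonical generator of $\mathscr{L}(\mathbb{Z})$ to $S(1\otimes S)$. This part is short once one invokes the freeness lemma already recorded in the excerpt.

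The main work is the second isomorphism, and here the plan is to identify the four generators of $M_2$ with explicit operators inside $B(l^2(\mathbb{N}))\otimes\mathscr{L}(\mathbb{Z})$ and then verify that they generate the whole algebra while carrying the state $\omega\otimes\tau$. Using the concrete representation $\pi$ from Lemma on $\mathscr{H}=l^2(\mathbb{N})\otimes l^2(\mathbb{Z})$, I would compute the action of $(1\otimes S^*)a$ and $(1\otimes S^*)b$: since $\pi(a)\,\xi_{n,k}=\sqrt{1-q^{2n}}\,\xi_{n-1,k}$ and $\pi(b)\,\xi_{n,k}=q^n\,\xi_{n,k+1}$, multiplying by $(1\otimes S^*)$ shifts the $\mathbb{Z}$-index back by one. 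The key observation is that $(1\otimes S^*)b$ acts diagonally in the $k$-variable up to the scalars $q^n$, so functional calculus recovers all the matrix units $e_{ij}\otimes 1$ in $B(l^2(\mathbb{N}))$; meanwhile the remaining shift needed to produce $1\otimes S$ can be extracted by combining these with $(1\otimes S^*)a$ and the polar-decomposition idea already used in Lemma \ref{Lem1}. Thus the four generators of $M_2$ generate a copy of $B(l^2(\mathbb{N}))\otimes\mathscr{L}(\mathbb{Z})$, and by Lemma the natural candidate isomorphism is $(\mathscr{L}^\infty(H),\psi)\cong(B(l^2(\mathbb{N}))\otimes\mathscr{L}(\mathbb{Z}),\omega\otimes\tau)$ twisted by the free shift $(1\otimes S^*)$.

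The hard part, and the step I would be most careful about, is checking that the \emph{state} is correctly transported: I must verify that $(\tau*\psi)_{\mid M_2}$ coincides with $\omega\otimes\tau$ under the identification, not merely that the algebras are abstractly isomorphic. The delicate point is that the free-product state $\tau*\psi$ restricted to $M_2$ must be shown to factor as the stated tensor-product state, which hinges on the $(1\otimes S^*)$ factors being free Haar unitaries that, after the polar-decomposition manipulation, reassemble into exactly the Haar unitary governing the $\mathscr{L}(\mathbb{Z})$ tensor factor with trace $\tau$, while the $B(l^2(\mathbb{N}))$ part inherits precisely $\omega$ from $\psi$ via Lemma . I would therefore make the moment computation explicit on words in the generators, using freeness of $S$ from $\mathscr{L}^\infty(H)$ to reduce any mixed moment to a product of $\psi$- and $\tau$-moments, and then match these against the defining moments of $\omega\otimes\tau$.
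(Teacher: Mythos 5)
Your handling of $M_1$ is correct and is exactly what the paper does: $M_1$ is generated by the single Haar unitary $S(1\otimes S)$, so $(M_1,\varphi_1)\cong(\mathscr{L}(\mathbb{Z}),\tau)$. Two small preliminary corrections to the second half: since all four generators of $M_2$ lie in $\mathscr{L}^{\infty}(H)$ (the $S^*$ appearing in them is the copy $1\otimes S^*$ \emph{inside} $\mathscr{L}^{\infty}(H)$, not the free generator of the first factor), we have $\varphi_2=\psi_{\mid M_2}$, and no freeness argument is needed or relevant for the moment computation you propose at the end.

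The genuine gap is in your claim that functional calculus and polar decompositions let you recover all matrix units $e_{ij}\otimes 1$ and the shift $1\otimes S$ inside $M_2$. This is false: under the identification $\mathscr{L}^{\infty}(H)\cong B(l^2(\mathbb{N}))\otimes\mathscr{L}(\mathbb{Z})$, the generators are $(1\otimes S^*)a=\sum_n\sqrt{1-q^{2n}}\,e_{n-1,n}\otimes S^{-1}$, $(1\otimes S^*)a^*=\sum_n\sqrt{1-q^{2(n+1)}}\,e_{n+1,n}\otimes S^{-1}$, $(1\otimes S^*)b=\sum_n q^n e_{nn}\otimes 1$ and $(1\otimes S^*)b^*=\sum_n q^n e_{nn}\otimes S^{-2}$, all of which are fixed by the normal period-two automorphism $e_{ij}\otimes S^m\mapsto(-1)^{i-j+m}\,e_{ij}\otimes S^m$. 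Hence $M_2$ sits inside the fixed-point algebra of that automorphism, so $1\otimes S\notin M_2$ and $e_{ij}\otimes 1\notin M_2$ for $i\neq j$; the isomorphism asserted in Lemma \ref{Lem2} cannot be the inclusion into $B(l^2(\mathbb{N}))\otimes\mathscr{L}(\mathbb{Z})$, and the step where you ``produce $1\otimes S$'' would fail. The paper's route is the following relabeling: the polar decomposition of $(1\otimes S^*)b^*$ yields $1\otimes S^2$ (not $1\otimes S$) and, by functional calculus on its positive part, the diagonal units $e_{ii}\otimes 1$; cutting $(1\otimes S^*)a$ and $(1\otimes S^*)a^*$ by these gives the \emph{twisted} matrix units $f_{ij}=e_{ij}\otimes S^{i-j}$, and one checks $M_2=W^*(f_{ij},\,1\otimes S^2)$. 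The isomorphism is then $f_{ij}\mapsto e_{ij}\otimes 1$, $1\otimes S^2\mapsto 1\otimes S$, which is state-preserving because $\psi\bigl(f_{ij}(1\otimes S^2)^n\bigr)=\delta_{i,j}(1-q^2)q^{2i}\delta_{n,0}=(\omega\otimes\tau)(e_{ij}\otimes S^n)$; the twist $S^{i-j}$ is invisible to the state since $\omega(e_{ij})=0$ for $i\neq j$. Your proposal is missing exactly this relabeling, which is the content of the lemma.
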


\begin{proof} The fact that $(M_1, \varphi_1) \cong (\mathscr{L}(\mathbb{Z}),\tau)$ is of course trivial. We want to show that $(M_2,\varphi_2)\cong (B(l^{2}(\mathbb{N}))\otimes \mathscr{L}(\mathbb{Z}),\omega\otimes \tau)$.\\

\noindent We have that $1\otimes S^2$ is in $M_2$, since this is the adjoint of the unitary part of the polar decomposition of $(1\otimes S^*)b^*$. Also all $e_{ii}\otimes 1$ are in $M_2$, by functional calculus on the positive part of this polar decomposition. Hence, by multiplying $(1\otimes S^*)a$ or $(1\otimes S^*)a^*$ to the left with the $e_{ii}\otimes 1$, and possibly multiplying with $1\otimes S^2$, we conclude that the $e_{ij}\otimes S^{i-j}$ with $|i-j|=1$ are in $M_2$. But then also all $f_{ij}=e_{ij}\otimes S^{i-j}$ with $i,j\in \mathbb{N}$ are in $M_2$, and it is not hard to see that in fact $M_2=W^*(f_{ij},(1\otimes S^2))$. Since $\psi(f_{ij}(1\otimes S^{2})^n) = (\omega\otimes \tau)(e_{ij}\otimes S^{n})$ by an easy calculation, we are done.\end{proof}

\begin{Lem} \label{Lem3} The W$^*$-probability space $(N,\phi):=(\mathscr{L}(\mathbb{Z}),\tau)*(B(l^{2}(\mathbb{N}))\otimes \mathscr{L}(\mathbb{Z}),\omega\otimes \tau)$ is a free Araki-Woods factor at parameter $q^2$.\end{Lem}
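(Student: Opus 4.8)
The plan is to unwind Definition \ref{DefFreeAr}: to say that $(N,\phi)$ is a free Araki-Woods factor at parameter $q^2$ means \emph{exactly} to produce a state-preserving isomorphism
\[(\mathscr{L}(\mathbb{Z}),\tau)*(B(l^{2}(\mathbb{N}))\otimes\mathscr{L}(\mathbb{Z}),\omega\otimes\tau)\;\cong\;(\mathscr{L}(\mathbb{Z}),\tau)*(B(l^{2}(\mathbb{N})),\omega).\]
So the whole content of the lemma is that freely tensoring the second free factor by the \emph{tracial} algebra $(\mathscr{L}(\mathbb{Z}),\tau)$ leaves the free product unchanged up to state-preserving isomorphism. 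First I would note that both sides carry the evident compatible states, so that what must be produced is an algebra isomorphism intertwining $\phi$ with $\omega*\tau$; the factoriality is then automatic, since by Definition \ref{DefFreeAr} it is built into the target.

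Second, I would isolate why this cannot be settled by elementary moment bookkeeping of the type used in Lemma \ref{LemBan1}. Writing $v$ for the generating Haar unitary of the first factor and $w=1\otimes S$ for the Haar unitary sitting inside the tensor factor and commuting with the base $B(l^{2}(\mathbb{N}))$, the left-hand side visibly involves two Haar unitaries $v$ and $w$ (with $v$ free from the entire second factor and $w$ commuting with the base), whereas the right-hand side involves a single Haar unitary free from the base. The sought isomorphism must therefore \emph{absorb} the extra unitary $w$, a reduction that is impossible in the tracial world (it would alter the free-dimension invariant) and is genuinely a feature of the non-tracial, type III setting. The required input is the free-product absorption result of \cite{Hou1}: for a W$^*$-probability space $(B,\omega)$ one has
\[(\mathscr{L}(\mathbb{Z}),\tau)*(B\otimes\mathscr{L}(\mathbb{Z}),\omega\otimes\tau)\;\cong\;(\mathscr{L}(\mathbb{Z}),\tau)*(B,\omega),\]
which I would apply with $B=B(l^{2}(\mathbb{N}))$ and $\omega$ the geometric state of Notation \ref{Not1}. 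This absorption rests on Shlyakhtenko's matrix models \cite{Shl1}: realizing the free factors through free quasi-free creation operators on a full Fock space, the free semicircular data furnished by the first copy of $(\mathscr{L}(\mathbb{Z}),\tau)$ is asymptotically free from, and in the limit absorbs, the tracial amplification $\otimes\,\mathscr{L}(\mathbb{Z})$ of the base.

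Granting this, the proof concludes at once: the right-hand side is by Definition \ref{DefFreeAr} a free Araki-Woods factor at parameter $q^2$, hence so is $(N,\phi)$. I expect the main obstacle to be precisely this absorption isomorphism, that is, the matrix-model computation of \cite{Shl1,Hou1}; once it is in hand, the remaining verifications—that the isomorphism is state-preserving and intertwines $\phi$ with $\omega*\tau$, and that the surviving copy of $(\mathscr{L}(\mathbb{Z}),\tau)$ really plays the role of the free Haar-unitary summand—are routine bookkeeping.
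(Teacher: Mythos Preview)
Your proposal correctly identifies the target isomorphism and correctly localizes the difficulty to an ``absorption'' phenomenon requiring the type~III setting and the machinery of \cite{Shl1,Hou1}. But the input you invoke,
\[(\mathscr{L}(\mathbb{Z}),\tau)*(B\otimes\mathscr{L}(\mathbb{Z}),\omega\otimes\tau)\;\cong\;(\mathscr{L}(\mathbb{Z}),\tau)*(B,\omega)\]
``for a W$^*$-probability space $(B,\omega)$'', is false as stated: for $B=\mathbb{C}$ it asserts $\mathscr{L}(\mathbb{F}_2)\cong\mathscr{L}(\mathbb{Z})$. You note the tracial obstruction yourself, yet you still phrase the needed input as a general lemma and never pin down which concrete statement in \cite{Hou1} you mean for the specific pair $(B(l^2(\mathbb{N})),\omega)$. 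No such ready-made isomorphism is available there; the absorption you want is the \emph{conclusion}, not a citable hypothesis, and your Fock-space paragraph is a description of the flavour of \cite{Shl1} rather than an argument.

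The paper supplies exactly the missing mechanism, and it is more indirect than a one-step absorption. Write $(M,\phi)$ for the free product in the statement and $(N,\theta)=(\mathscr{L}(\mathbb{Z}),\tau)*(B(l^2(\mathbb{N})),\omega)$. One first passes to the corner at $e_{00}$: Proposition~3.10 of \cite{Hou1} computes $(e_{00}Me_{00},\tfrac{1}{1-q^2}\phi)\cong(\mathscr{L}(\mathbb{Z}),\tau)*(e_{00}Ne_{00},\tfrac{1}{1-q^2}\theta)$. Proposition~2.7 of \cite{Hou1} (resting on \cite{Shl1}) shows that $(e_{00}Ne_{00},\tfrac{1}{1-q^2}\theta)$ is itself a free Araki--Woods factor at parameter $q^2$. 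Only now does Shlyakhtenko's genuine absorption property (Corollary~5.5 of \cite{Shl1}) apply: a free Araki--Woods factor free-producted with $(\mathscr{L}(\mathbb{Z}),\tau)$ is unchanged. Hence $e_{00}Me_{00}$ is free Araki--Woods at parameter $q^2$, and tensoring back with $(B(l^2(\mathbb{N})),\omega)$ gives the claim for $(M,\phi)$. The corner detour is what converts the problem into one to which the known absorption theorem actually applies.
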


\begin{proof} The proof is completely similar to the one of Theorem 3.1 of \cite{Hou1}. Denote $(N,\theta) = (\mathscr{L}(\mathbb{Z}),\tau)*(B(l^{2}(\mathbb{N})),\omega)$, and denote $\phi_{0} = \frac{1}{1-q^2}\phi$ and $\theta_0=\frac{1}{1-q^2}\theta$. Then by Proposition 3.10 of \cite{Hou1}, we will have that \[(e_{00}Me_{00},\phi_0) \cong (\mathscr{L}(\mathbb{Z}),\tau)*(e_{00}Ne_{00},\theta_0).\] By Proposition 2.7 in \cite{Hou1} (which is based on the proof of Theorem 5.4 and Proposition 6.3 in \cite{Shl1}) and the remark before it, we know that $(e_{00}Ne_{00},\theta_0)$ as well as $(N,\theta) \cong (e_{00}Ne_{00},\theta_0)\otimes (B(l^2(\mathbb{N})),\omega)$ are free Araki-Woods factors at parameter $q^2$. By the free absorption property (\cite{Shl1}, Corollary 5.5), $(e_{00}Me_{00},\phi_0)$ is a free Araki-Woods factor at parameter $q^2$, and hence also $(M,\phi) \cong (e_{00}Me_{00},\phi_0)\otimes (B(l^2(\mathbb{N})),\omega)$ is.

\end{proof}

\begin{proof}[Proof (of Theorem \ref{TheoIso})]  By the first two lemmas, $(\mathscr{L}^{\infty}(G),\varphi)$ is isomorphic to the free product of $(\mathscr{L}(\mathbb{Z}),\tau)$ with $(B(l^{2}(\mathbb{N}))\otimes \mathscr{L}(\mathbb{Z}),\omega\otimes \tau)$, which by the third lemma is a free Araki-Woods factor at parameter $q^2$.
\end{proof}

\vspace{0.3cm}

\noindent \textbf{Acknowledgement:} The motivation for this paper comes from a question posed by Stefaan Vaes concerning the validity of Theorem \ref{TheoIso}.

\end{document}